\definecolor{hot}{RGB}{65,105,225}
\theoremstyle{definition}
\theoremstyle{remark}
\numberwithin{equation}{section}
\theoremstyle{break}
\newtheorem{thm}{Theorem}[section]
\theoremstyle{definition}
\newtheorem{rem}{Remark}[section]
\newtheorem*{ackn}{Acknowledgements}
\newtheorem*{thm*}{Theorem}
\newcommand{\Spf}{\operatorname{Specf\!}{}}  
\newcommand{\F}{\mathbb{F}_2}
\newcommand{\GL}{\operatorname{\mathrm{GL}\!}{}}
\begin{document}

\title{A note on the group extension problem to semi-universal deformation}

\author{ An-Khuong DOAN}
\address{IMJ-PRG, UMR 7586, Sorbonne Université, 4 place Jussieu, 75252 Paris Cedex 05, France}
\email{an-khuong.doan@imj-prg.fr}
\address{CNRS-CMLS, UMR 7640, \'{E}cole polytechnique,  91128 Palaiseau Cedex, France.}
\email{ankhuong6147@gmail.com}
\thanks{ }

\subjclass[2010]{14D15, 14B10, 13D10}

\date{September 26, 2023.}

\dedicatory{ }

\keywords{Deformation theory, Moduli theory, Equivariance structure, Kuranishi family}

\begin{abstract}
The aim of this note is twofold. Firstly, we explain in detail Remark 4.1 in \cite{doan-a} by showing that the action of the automorphism group of the second Hirzebruch surface $\mathbb{F}_2$ on itself extends to its formal semi-universal deformation only up to the first order. Secondly, we show that for reductive group actions, the locality of the extended actions on the Kuranishi space constructed in \cite{doan-equivariant} is the best one could expect in general.
\end{abstract}

\maketitle

\section{Introduction} Understanding analytic moduli spaces globally is in general an extremely hard task to accomplish even in simple cases. However, one often still would like to study them at least locally. For example, in the case of  the moduli space of complex structures on a given differentiable compact manifold, the best approximation for the local moduli space of a given complex structure $I_0$, provided by the exceptional work of Kodaira-Spencer-Kuranishi (\cite{ks}, \cite{kodaira-complex} and \cite{kuranishi-deformation}) is what's so-called Kuranishi space (or equivalently semi-universal deformation space) - an analytic space containing all complex structures sufficiently close to $I_0$. Heuristically, if the Kuranishi space can be equipped with a compatible action of the stabilizer $G_0$ of $I_0$, then ``its quotient" should be a potential candidate for a neighborhood of $I_0$ or at least should give arise to a morphism with good geometric properties from that quotient to a neighborhood of $I_0$ in the moduli space. This is somehow the motivation for the group extension problem on the Kuranishi spaces done in our previous work \cite{doan-a} and \cite{doan-equivariant}, claiming essentially that it is feasible in general only when the stabilizer is reductive. The goal of this note is to clarify the following two issues arising along the way.

In \cite{doan-a}, we showed that the action of the automorphism group of the second Hirzebruch surface $\F$, denoted by $G$, on $\F$ does not extend to its formally semi-universal deformation $\hat{\mathcal{X}}$. A question naturally posed by the anonymous referee is if the $G$-action extends to the restriction of $\hat{\mathcal{X}}$ on the $n^{\text{th}}$-infinitesimal neighborhood of the base space, which we denote by $\hat{\mathcal{X}}_n$, for small value $n$ (cf. \cite[Remark 4.1]{doan-a}). The answer is obviously yes for $n=1$, which follows immediately from the semi-universality of the deformation considered therein. We even claimed without any proof that the first order is the best one up to which the $G$-action extends. Unfortunately, our somewhat tricky method used there, did not succeed in responding to this question in case that $n\geq 2$. Therefore, in this short note, we shall justify that it is only true in general for $n=1$ by a standard but extremely powerful method in deformation theory -  the power series method, i.e. extending simultaneously the vector fields generated by $G$ on $\F$, together with their Lie bracket relations, order by order (cf. Theorem \ref{t1.6.1} below). The obstruction will reveals itself naturally right in the second order.  Consequently, this example could be served as a vidid illustration for Murphy's law in algebraic geometry. It is nothing but the content of $\S$\ref{se2} and $\S$\ref{se4}. 

In \cite{doan-equivariant}, we proved that actions of reductive complex Lie groups on a given complex compact manifold are locally extendable to its Kuranishi family. In its proof, the globality is lost right at the moment when we pass from compact groups to reductive groups by means of a complexification argument (cf.  \cite[Corollary 4.1 and Theorem 5.2]{doan-equivariant}). $\S$\ref{se5} is dedicated to giving a concrete example which confirms that this loss is somehow reasonable: these extended actions can not be global in general, illustrating the optimality of these results. The main object under deformation is still the second Hirzebruch surface $\F$ but this time instead of extending the action of  the whole automorphism group $G$ of $\F$, we extend only the action of the multiplicative group $\mathbb{C}^*$ considered as a subgroup of $G$. It turns out that this action can not be trivial on the base so that it is not global on the Kuranishi space thought of as a germ of complex spaces around the reference point.

The principle method employed throughout the note is the power series method that we find sufficiently elementary to understand thoroughly obstructions to the group extension problem (or the vector field extension problem) on the (formal) Kuranishi family of $\F$. It is worth remarking that the latter one is a very particular example for the treatment of a much more general problem already dealt by J. Wavrik fifty years ago: deforming cohomology classes (cf. \cite{wav}) of which a useful criterion for vertical extensions of vector fields on $\mathbb{F}_2$ is recalled and used in Remark \ref{wavrem}. 

Through the rest of the paper, by $\F$, $G$, $\pi: \mathcal{X} \rightarrow \mathbb{C}$, $\hat{\pi}: \hat{\mathcal{X}} \rightarrow \Spf(\mathbb{C})$ and $\hat{\mathcal{X}}_n$ we always mean the second Hirzebruch surface, its automorphism group, its semi-universal deformation, its formal semi-universal deformation and the restriction of $\hat{\mathcal{X}}$ on the $n^{\text{th}}$-infinitesimal neighborhood of the base space, respectively. For the other notations and conventions, the reader is referred  to \cite{doan-a}.

\begin{ackn}  We would like to thank the reviewer of \cite{doan-a} for asking the first question and Prof. Julien Grivaux for his careful verification. We are warmly thankful to the referee of this article, whose dedicated work led to a remarkable improvement of it.
\end{ackn}

\section{Vector fields generated by $G$ on $\F$ and $\hat{\mathcal{X}}_1$}\label{se2}
Recall that the second Hirzebruch surface $\F$ is defined to be the projectivization $\mathbb{P}(\mathcal{O}_{\mathbb{P}^1}(2)\oplus \mathcal{O}_{\mathbb{P}^1})$ of the bundle $\mathcal{O}_{\mathbb{P}^1}(2)\oplus \mathcal{O}_{\mathbb{P}^1}$. In terms of algebraic equations, it is given explicitly by $$\F:=\{ ([x:y:z],[u:v])  \in \mathbb{P}^2 \times \mathbb{P}^1 | yv^2=zu^2\} $$ whose semi-universal deformation can be shown to be \begin{equation} \label{kurfa}\mathcal{X}:=\{ ([x:y:z],[u:v],t)  \in \mathbb{P}^2 \times \mathbb{P}^1 \times \mathbb{C} \text{ }|\text{ } yv^2-zu^2 -txuv=0\},
\end{equation} equipped with the projection $\pi$  on the last factor (cf. \cite[Proposition 3.1]{doan-a}).

On the intersection of two standard open sets $U_x=\lbrace x=1 \rbrace $ and $U=\lbrace u=1 \rbrace $ in $\mathbb{P}^1\times \mathbb{P}^1$ (cf. the proof of \cite[Proposition 2.1]{doan-a}), the action of $G:=\text{Aut}(\F)=\mathbb{C}^3 \rtimes\mathrm{GL}(2,\mathbb{C})$ on $\F$ given by 
$$g.(v,[1:y])=\left (\frac{c+dv}{a+bv}, \left [ 1:\frac{y(a+bv)^2}{1+y(a_0v^2+a_1v+a_2)} \right ] \right )$$ where $g= \left((a_0,a_1,a_2)^t, \begin{pmatrix}
a &b \\c 
 &d 
\end{pmatrix}\right) \in G$ and $(v,[1:y])\in \F$. Since the Lie algebra of $G$ is 7-dimensional then we obtain $7$ vector fields on $\F$ $$\begin{cases}
 E_1'=-v^2y^2\partial_y\\ 
 E_2'=-y^2\partial_y\\ 
 E_3'=\partial_v\\ 
 E_4'=-vy^2\partial_y\\ 
 E_5'=2y\partial_y-v\partial_v\\ 
 E_6'=v\partial_v\\ 
 E_7'=2yv\partial_y-v^2\partial_v
\end{cases}$$ on $\F$ with the relations \begin{equation}\label{Lierel}\begin{matrix}
\;\;\begin{cases}
[E_1',E_2']=0 \\ 
 [E_1',E_3']=-2E_4' \\ 
[E_1',E_4']=0  \\ 
[E_1',E_5']=0  \\ 
[E_1',E_6']=-2E_1'\\
[E_1',E_7']=0  
\end{cases}, & \;\;\begin{cases}
[E_2',E_3']=0  \\ 
[E_2',E_4']=0  \\ 
[E_2',E_5']=-2E_2'  \\ 
[E_2',E_6']=0 \\
[E_2',E_7']=-2E_4' 
\end{cases}, & \begin{cases}
[E_3',E_4']=E_2'  \\ 
[E_3',E_5']=-E_3'  \\ 
[E_3',E_6']=E_3\\ 
[E_3',E_7']=E_5'-E_6' 
\end{cases},\\ &\\
\begin{cases}
[E_4',E_5']=-E_4' \\ 
[E_4',E_6']=-E_4' \\ 
[E_4',E_7']=-E_1'
\end{cases}, &\begin{cases}
[E_5',E_6']=0 \\ 
[E_5',E_7']=-E_7' 
\end{cases}, & [E_6',E_7']=E_7'.\;\;\;\;\;\;\;\;\; 
\end{matrix}
\end{equation}
These vector fields correspond to the canonical basis of $\mathbb{C}^3\times \mathrm{M}(2,\mathbb{C})=\mathrm{Lie}(G)$. Moreover, they extend to vector fields on $\hat{\mathcal{X}}_1$ as explained in the introduction. In other words, we also have $7$ vector fields $E_1^{(1)},E_2^{(1)},E_3^{(1)},E_4^{(1)},$ $E_5^{(1)},E_6^{(1)},E_7^{(1)}$ on $\hat{\mathcal{X}}_1$ with the same relations as (\ref{Lierel}) and the restriction of these vectors on the central fiber are nothing but $E_1',E_2',E_3',E_4',E_5',E_6',E_7'$. Therefore, by the discussion in \cite[$\S3.2$]{doan-a}, we can assume that our seven vector fields are of the form (up to the first order with respect to $t$)
\begin{equation}\label{formgen}\begin{cases}
 E_1^{(1)}=\left (-v^2y^2+p_1(v,y)t  \right )\partial_y+q_1(v)t\partial_v+k_1t\partial_t\\ 
 E_2^{(1)}=\left (-y^2+p_2(v,y)t  \right )\partial_y+q_2(v)t\partial_v+k_2t\partial_t\\ 
 E_3^{(1)}=p_3(v,y)t\partial_y+\left (1+q_3(v)t  \right )\partial_v+k_3t\partial_t\\ 
 E_4^{(1)}=\left (-vy^2+p_4(v,y)t  \right )\partial_y+q_4(v)t\partial_v+k_4t\partial_t\\ 
 E_5^{(1)}=\left (2y+p_5(v,y)t  \right )\partial_y+\left (-v+q_5(v)t  \right )\partial_v+k_5t\partial_t\\ 
 E_6^{(1)}=p_6(v,y)t\partial_y+\left (v+q_6(v)t  \right )\partial_v+k_6t\partial_t\\ 
 E_7^{(1)}=\left (2yv+p_7(v,y)t  \right )\partial_y+\left (-v^2+q_7(v)t  \right )\partial_v+k_7t\partial_t
\end{cases}
\end{equation}
where $p_i,q_i$ are polynomials (to be determined) whose degree with respect to each variable does not exceed $2$.  

On the other hand, since $E_i^{(1)}$'s are vectors fields on $\hat{\mathcal{X}}_1$ then by \cite[Lemma 3.1]{doan-a}, we can also write \begin{equation} \label{formver}E_i^{(1)}=g_i(v,t)\frac{\partial }{\partial v}+(\alpha_i(v,t)y^2+\beta_i(v,t)y+\gamma_i(v,t))\frac{\partial }{\partial y}+k(t)\frac{\partial }{\partial t}\;(\mod t^2)\end{equation} where $$\begin{cases}
 g_i(v,t)=A_i(t)v^2+B_i(t)v+C_i(t)\\ 
 \alpha_i(v,t)=a_i(t)v^2+b_i(t)v+c_i(t)\\ 
 \beta_i(v,t)=-2(a_i(t)t+A_i(t))v+e_i(t)\\ 
\gamma_i(v,t)=tA_i(t)
\end{cases} (\mod t^2).$$ Here $A_i(t),B_i(t),C_i(t),a_i(t),b_i(t),c_i(t),e_i(t)$ are linear functions in $t$ with a relation 
\begin{equation} \label{obs}e_i(t)t+B_i(t)t-k_i(t)=0\;(\mod t^2) \end{equation} for $i=1,2.$ 

As the restriction of $E_i^{(1)}$'s on the central fiber are $E_i'$'s, respectively, direct computation shows that
$$\begin{cases}
E_1^{(1)}= \{ -v^2y^2+t[( a_1v^2+b_1v+c_1) y^2+(2v(1-A_1)+e_1)y] \}  \partial_y+(A_1v^2+B_1v+C_1)t\partial_v\\ 
 E_2^{(1)}= \{ -y^2+t[( a_2v^2+b_2v+c_2) y^2+(-2vA_2+e_2)y] \}  \partial_y+(A_2v^2+B_2v+C_2)t\partial_v
\end{cases}$$  where  $a_i,b_i,c_i, A_i,B_i,C_i,e_i$  are some complex numbers ($i=1,2$).   As a matter of fact, in view of (\ref{formgen}), an easy identification provides 
\begin{equation}\label{equivsys}\begin{cases}
p_1(v,y)= ( a_1v^2+b_1v+c_1) y^2+(2v(1-A_1)+e_1)y \\ 
q_1(v)=A_1v^2+B_1v+C_1 \\
p_2(v,y)= ( a_2v^2+b_2v+c_2) y^2+(-2vA_2+e_2)y \\ 
q_2(v)=A_2v^2+B_2v+C_2.
\end{cases}
\end{equation}
\section{Extensions of vector fields on $\hat{\mathcal{X}}_{1}$ to $\hat{\mathcal{X}}_{2}$. } \label{se4}
We would like to give an explicit description of $E_i^{(1)}$ ($i=1,\cdots,7$) by solving the system of equations given by the Lie relations between $E_i^{(1)}$'s. An important remark is in order.
\begin{rem} Note that there is no component $c\frac{\partial}{\partial t }$ in all $E_i^{(1)}$ ($c$ is a complex number). So, the Lie bracket of $E_i^{(1)}$'s is well-defined when we take modulo $t^2$.
\end{rem}

Now, we wish to find $q_1$, $q_2$ and $q_4$ given in (\ref{formgen}). By computing explicitly the Lie brackets $[E_1^{(1)},E_6^{(1)}], [E_3^{(1)},E_4^{(1)}],[E_3^{(1)},E_5^{(1)}], [E_1^{(1)},E_3^{(1)}] ,[E_5^{(1)},E_7^{(1)}] ,[E_1^{(1)},E_2^{(1)}] ,[E_1^{(1)},E_7^{(1)}]$ using their general form (\ref{formgen}) and then taking into account the Lie relations (\ref{Lierel}), we obtain $k_1=k_2=k_3=k_4=k_7=0$ and 
 \begin{equation}\label{sys}\left\{\begin{matrix}
\partial_vq_4+k_3q_4-k_4q_3-q_2 &=&0\; \\ 
-\partial_vq_1+2q_4&=&0\;\\ 
  -2vq_1+v^2\partial_vq_1&=&0\;\\
 -v^2y^2\partial_yp_2-2yp_1+y^2\partial_yp_1+2v^2yp_2+2vy^2q_2&=&0.
\end{matrix}\right.
 \end{equation}
 From the first three equations of (\ref{sys}) and (\ref{equivsys}), it follows that \begin{equation}\label{exform}\begin{cases}
q_1=A_1 v^2  \\q_4=A_1 v \\ 
 q_2=A_1. 
\end{cases}\end{equation} Substituting (\ref{exform}) into the last equation of (\ref{sys}) gives \begin{align*}
0 &= -v^2y^2\partial_yp_2-2yp_1+y^2\partial_yp_1+2v^2yp_2+2vy^2q_2\\ 
 &=v^2y(2p_2-y\partial_yp_2)-y(2p_1-y\partial_yp_1)+2vy^2A_1 (\text{ since }q_2=A_1) \\ 
 &= v^2y(-2vA_2+e_2)y-y(2v(1-A_1)+e_1)y+2vy^2A_1 \\ 
 &= y^2\left ( v^2(-2vA_2+e_2)- (2v(1-A_1)+e_1)+2A_1v\right )\\
&=y^2\left ( -2v^3A_2+e_2v^2- 2v(1-2A_1)-e_1\right )
\end{align*} which implies that $ A_2=0,  e_1=e_2=0, A_1=\frac{1}{2} $.
 Hence, $q_1=\frac{1}{2}v^2, q_2=\frac{1}{2}, q_4=\frac{1}{2}v.$
Thus, we have \begin{equation}\label{eqfinal}\begin{cases}
   E_1^{(1)}=\left (-v^2y^2+p_1(v,y)t  \right )\partial_y+\frac{1}{2}tv^2\partial_v\\ 
  E_2^{(1)}=\left (-y^2+p_2(v,y)t  \right )\partial_y+\frac{1}{2}t\partial_v
\end{cases}(\mod t^2)\end{equation}
up to the first order.
\begin{thm}\label{t1.6.1}
The action of $G$ on $\F$ extends to the formally semi-universal deformation $\hat{\mathcal{X}}$ only up to the first order.
\end{thm}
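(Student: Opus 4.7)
The plan is to run the same power series calculation one order higher: assume for contradiction that the $G$-action extends to $\hat{\mathcal{X}}_2$, so that each vector field $E_i^{(1)}$ already computed on $\hat{\mathcal{X}}_1$ extends further to a vector field $E_i^{(2)}$ on $\hat{\mathcal{X}}_2$ (that is, modulo $t^3$) with the same prescribed Lie bracket relations, and then produce a concrete algebraic obstruction among the order-$t^2$ coefficients.

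First I would write each $E_i^{(2)}$ in the shape dictated by \cite[Lemma 3.1]{doan-a}, namely
\[
E_i^{(2)}=g_i(v,t)\partial_v+\bigl(\alpha_i(v,t)y^2+\beta_i(v,t)y+\gamma_i(v,t)\bigr)\partial_y+k_i(t)\partial_t \pmod{t^3},
\]
with the same constraint $b_i(t)t^2+e_i(t)t+B_i(t)-k_i(t)=0$, but now treating the formal series $A_i,B_i,C_i,a_i,b_i,c_i,e_i$ as polynomials in $t$ of degree $\leq 2$. The constant and linear coefficients are already pinned down by Sections \ref{se2} and \ref{se3}; the unknowns are precisely the seven new quadratic coefficients per vector field. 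This yields a finite (albeit large) list of scalar unknowns entering $p_i,q_i$ at order $t^2$.

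Second, I would impose the Lie bracket identities of Section \ref{se2} modulo $t^3$. Unlike in the first-order analysis, the bilinear nature of the bracket now mixes the order-$t$ corrections of two fields into the order-$t^2$ part of their bracket, so many more relations become nontrivial. I would concentrate on the brackets that were most restrictive at first order, in particular $[E_1^{(1)},E_7^{(1)}]=0$, $[E_1^{(1)},E_2^{(1)}]=0$, $[E_3^{(1)},E_4^{(1)}]=E_2^{(1)}$ and $[E_1^{(1)},E_3^{(1)}]=-2E_4^{(1)}$. Since the first-order shifts $q_1=\tfrac12 v^2$, $q_2=\tfrac12$, $q_4=\tfrac12 v$ were already forced by those very brackets, the analogues of the master equations \eqref{e1.6.1}--\eqref{e1.6.4} at order $t^2$ become inhomogeneous, with inhomogeneities built out of the already-known first-order data and the partially-known second-order data. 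The aim is to reduce the system, by elimination, to a single scalar identity that the unknowns must satisfy but cannot.

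The final step is to extract the contradiction. Mirroring the derivation that produced $A_1=\tfrac12$ and $A_2=0$ at first order, combining the three $\partial_v$-components of the above four brackets should give, after taking the coefficient of $t^2$, one universal relation among the quadratic coefficients $A_i^{(2)}$ of $A_i(t)$ that is incompatible with the required extension condition; equivalently, one reads off a nonzero element of the obstruction space. I expect this last step to be the main obstacle: the linear algebra at order $t^2$ is considerably bulkier than at order $t$, and isolating the single surviving scalar obstruction requires careful bookkeeping. I would carry out the elimination in a symbolic algebra package and then present only the minimal chain of equations that exhibits the obstruction, concluding that no $E_i^{(2)}$ can exist and hence the $G$-action does not extend to $\hat{\mathcal{X}}_2$, proving Theorem \ref{t1.6.1}.
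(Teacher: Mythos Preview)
Your overall strategy---assume the extension to $\hat{\mathcal{X}}_2$ exists and derive a contradiction among the order-$t^2$ bracket relations---is exactly the paper's, but your execution is far heavier than necessary. The paper does not track all seven $E_i^{(2)}$, nor four brackets, nor do any elimination: it uses \emph{only} $E_1^{(2)}$ and $E_2^{(2)}$, and only the $\partial_v$-component of the single relation $[E_1^{(2)},E_2^{(2)}]=0\pmod{t^3}$.

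The reason this suffices is already visible in the first-order data you computed. Since $k_1=k_2=0$ and $q_1=\tfrac12 v^2$, $q_2=\tfrac12$, the second-order ansatz has $\partial_v$-parts $\tfrac12 t v^2+n_1(v)t^2$ and $\tfrac12 t+n_2(v)t^2$, together with possible $l_it^2\partial_t$ terms. In the $\partial_v$-component of the bracket, the new unknowns $n_i(v)t^2$ can only interact with order-$0$ pieces of the other field, but $E_1',E_2'$ have no $\partial_v$-part and their $\partial_y$-parts annihilate functions of $v$; hence the $n_i$ drop out entirely at order $t^2$. What remains is the first-order$\times$first-order contribution
\[
\tfrac12 t v^2\,\partial_v(\tfrac12 t)-\tfrac12 t\,\partial_v(\tfrac12 t v^2)=-\tfrac12 t^2 v,
\]
plus the $\partial_t$-contributions $\tfrac{l_1}{2}t^2-\tfrac{l_2}{2}t^2v^2$. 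No choice of $l_1,l_2$ kills the $-\tfrac12 t^2 v$ term, and the contradiction is immediate.

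So your plan would eventually succeed, but the anticipated ``considerably bulkier'' system and the symbolic-algebra elimination are unnecessary. The efficient move is to notice in advance which bracket component at order $t^2$ is insulated from the new second-order unknowns; the $\partial_v$-component of $[E_1,E_2]$ is precisely such a component, and it already carries a nonzero forced term coming from the rigid first-order values $q_1,q_2$.
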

\begin{proof}
We shall prove it by contradiction. Suppose that the vector fields $E_1^{(1)}, E_2^{(1)}$ in (\ref{eqfinal}) could be extended to the vector fields $E_1^{(2)}, E_2^{(2)}$ of the second order. In other words, there would exist $m_i(v,y), n_i(v),l_i$ ($i=1,2$) such that 
$$\begin{cases}
   E_1^{(2)}=\left [-v^2y^2+p_1(v,y)t+m_1(v,y)t^2   \right ] \partial_y+\left [\frac{1}{2}tv^2+n_1(v)t^2  \right ]\partial_y+l_1t^2\partial_t\\ 
 E_2^{(2)}=\left [-y^2+p_2(v,y)t +m_2(v,y)t^2  \right ] \partial_y+\left [\frac{1}{2}t +n_2(v)t^2 \right ]\partial_y+l_2t^2\partial_t
\end{cases}$$
and the relation 
\begin{equation}\label{mainobs}[E_1^{(2)},E_2^{(2)}]=0\;(\mod t^3) \end{equation} holds true where $m_i,n_i$ are some polynomials in variables $v,y$ whose degrees with respect to $v$ and $y$ do not exceed $2$ and $l_i$'s are complex numbers. However, by direct inspection, we have that the coefficient of the $\partial_v$-component of $[E_1^{(2)},E_2^{(2)}]$ is 
$$-\frac{1}{2}t^2v+\frac{l_1}{2}t^2-\frac{l_2}{2}t^2v^2 \;(\mod t^3).$$ This quantity is never zero $(\mod t^3)$ for any choice of $l_1$ and $l_2$. Thus, the Lie bracket relation (\ref{mainobs}) is not preserved when we extend them to the second order, which is a contradiction. 
\end{proof}

\begin{rem} Although we can not extend the vector fields $E_1',\ldots,E_7'$ to the semi-universal deformation of $\mathbb{F}_2$, preserving the Lie relation (\ref{Lierel}), an interesting question is if there exist extensions of $E_1',\ldots,E_7'$ on the Kuranishi family, not necessarily subject to (\ref{Lierel}) so that they generate a finite-dimensional Lie algebra. In that case, which Lie groups are associated to it? We thank the anonymous referee again for posing this question and hope to be able to address it in a future work.
\end{rem}
\section{Convergent equivariant Kuranishi family of $\mathbb{F}_2$} \label{se5}

In this final section, we show that the local actions constructed in \cite[Theorem 5.2]{doan-equivariant} can not be global in general by giving a detailed example which is still about the Hirzebruch surface $\F$ and its Kuranishi family $\pi:\; \mathcal{X}\rightarrow (\mathbb{C},0)$ considered in \cite{doan-a}. This time, we consider further an action of $G:=\mathbb{C}^*\times \mathbb{C}^*$ on $\F$ as follows. $\mathbb{C}^*\times \mathbb{C}^*$ can be embedded into $\GL(2,\mathbb{C})$ as a subgroup of invertible diagonal matrices. In particular, its action on $\mathbb{F}_2$ given by  
$$g(p)=\begin{cases}
\left (\left [xu^2:ya^2u^2:yd^2v^2  \right ] ,\left [au:dv  \right ]  \right )& \text{ if } u \not= 0 \\ 
 \left (\left [xv^2:za^2u^2:zd^2v^2  \right ] ,\left [au:dv  \right ]  \right )& \text{ if } v\not=0 
\end{cases},$$ or equivalently
$$g(p)= \left (\left [x:ya^2:zd^2  \right ] ,\left [au:dv  \right ]  \right ),$$ where $p= \left (\left [x:y:y  \right ] ,\left [u:v  \right ]  \right ) \in \F$.

An application of \cite[Theorem 5.2]{doan-equivariant} gives us a local $G$-equivariant structure on $\pi:\; \mathcal{X} \rightarrow \mathbb{C}$. We shall first show that if there is any extended $G$-action  on $\mathcal{X}$, then $G$ can not act trivially on the Kuranishi space. Indeed, by the same analysis as in \S\ref{se2}, the $G$-action on $\mathbb{F}_2$ this time gives rise to two commuting vector fields on $\mathbb{F}_2$ $$\begin{cases}
 E_5'=2y\partial_y-v\partial_v\\ 
 E_6'=v\partial_v.\\ 
\end{cases}$$ Suppose for the moment that the $G$-action on $\F=\mathcal{X}_0:=\pi^{-1}(0)$ could be lifted to a $G$-action on $\mathcal{X}$, then we have two commuting vector fields on $\mathcal{X}$, say $E_5$ and $E_6$.  As before, we can assume that our two vector fields $E_5$ and $E_6$ are of the form (\ref{formver}). Once again, the restriction of these above vector fields on the central fiber are nothing but $E_5',E_6'$ respectively. This implies that
$$\begin{cases}
E_5=\left [2y+t\big((a_5v^2+b_5v+c_5)y^2+(-2A_5v+e_5)y \big)  \right ]\partial_y+\left [ -v+t\left ( A_5v^2+B_5v+C_5 \right ) \right ]\partial_v+k_5t\partial_t\\ 
 E_6=t\left [(a_6v^2+b_6v+c_6)y^2+(-2A_6v+e_6)y   \right ]\partial_y+\left [ v+t\left ( A_6v^2+B_6v+C_6 \right ) \right ]\partial_v+k_6t\partial_t.
\end{cases}$$ up to the first order.
Now, the obstruction of lifting vector fields (\ref{obs}) forces $k_5=k_6=1$. It follows that $E_5$ and $E_6$ are not vertical. Therefore, if the $G$-action was extended then, $G$ (more precisely, two subgroups $\mathbb{C}^*\times \lbrace 1 \rbrace $ and $\lbrace 1 \rbrace\times \mathbb{C}^*$) could not act trivially on the base.
\begin{rem} \label{wavrem} A quicker way to see the non-verticality of $E_5$ and $E_6$, pointed out to us by the anonymous referee is to use Wavrik's criterion. Namely, in \cite[\S5]{wav}, he gave a necessary condition for a general vector field $$\theta:= (Av^2+Bv+C)\partial_v +\left[(av^2+bv+c)y^2 +(-2Av+e)y \right]\partial_y$$ on $\mathbb{F}_2$ to be vertically extended to its Kuranishi family:  \begin{equation}\label{wavobs}B+e=0\end{equation}which is not satisfied for $E_5'$ and $E_6'$ due to the fact that $B_5+e_5=-1+2=1$ and $B_6+e_5=1+0=1$. Hence, $E_5$ and $E_6$ can not be vertical. Taking advantage of this occasion, we emphasize that (\ref{wavobs}) is not sufficient. A necessary and sufficient condition is obtained by letting $k_1(t)\equiv 0$ in \cite[Lemma 3.1(3.6)]{doan-a}). Thus, our obstruction therein generalizes Wavrik's one (\ref{wavobs}).
\end{rem}
Finally, recall that the family (\ref{kurfa}) is semi-universal only for $t=0$ and complete for $t\neq 0$. Moreover, its germ of deformation $(\mathcal{X},\pi^{-1}(0))\rightarrow (\mathbb{C},0)$ is unique. A possible $G$-action on $\mathcal{X}$ that extends the initial $G$-action on $\mathbb{F}_2$ is given explicitly by 
\begin{equation}\label{act}g(p)= \left (\left [x:ya^2:zd^2  \right ] ,\left [au:dv  \right ],adt  \right )\end{equation} for $p:=([x:y:z],[u:v],t) \in \mathcal{X}$ and $g=(a,d)\in \mathbb{C}^*\times \mathbb{C}^* $. For $\mathbb{C}^*$-action, we can restrict the $\mathbb{C}^*\times \mathbb{C}^*$-action on its subgroup $\lbrace (\alpha, 1) \mid \alpha \in \mathbb{C}^*\rbrace.$ Evidently, this $\mathbb{C}^*$-action can not be global on the germ of complex space $(\mathbb{C},0)$.
\begin{rem} Of course, if we consider the entire complex line $\mathbb{C}$ as the Kuranishi space instead of only the germ of complex spaces $(\mathbb{C},0)$, then   the family (\ref{kurfa}) as well as the $\mathbb{C}^*\times \mathbb{C}^*$-action (\ref{act}) are still well-defined. This indicates somehow that our example is still not completely convincing.

\end{rem}

\end{document}